\newcommand{\bv}{\mathbf{v}}
\newtheorem{theorem}{Theorem}[section]
\newtheorem{lemma}[theorem]{Lemma}
\newtheorem{definition}[theorem]{Definition}
\newtheorem{prop}[theorem]{Proposition}
\newtheorem{conjecture}{Conjecture}
\newtheorem{cor}[theorem]{Corollary}
\title{A Generalization of the Graham-Pollak Tree Theorem to Steiner Distance}
\author{Joshua Cooper and Gabrielle Tauscheck}
\date{\today}
\begin{document}

\maketitle

\begin{abstract}
Graham and Pollak (\cite{GraPol71}) showed that the determinant of the distance matrix of a tree $T$ depends only on the number of vertices of $T$.  Graphical distance, a function of pairs of vertices, can be generalized to ``Steiner distance'' of sets $S$ of vertices of arbitrary size, by defining it to be the fewest edges in any connected subgraph containing all of $S$.  Here, we show that the same is true for trees' {\em Steiner distance hypermatrix} of all odd orders, whereas the theorem of Graham-Pollak concerns order $2$.  We conjecture that the statement holds for all even orders as well.
\end{abstract}

\section{Introduction}

%%% introduce the problem: Graham-Pollak thm, steiner distance, hypermatrix, hyperdeterminant

Graham and Pollak showed that the determinant of the distance matrix of a tree $T$ on $n$ vertices -- the $n \times n$ matrix whose each $(v,w) \in V(T)\times V(T)$ entry is the ordinary graph distance between $v$ and $w$ -- depends only on $n$.  In fact, they gave a formula: $-(n-1)(-2)^{n-2}$.  Y.~Mao asks\footnote{Personal communication.} whether this result can be extended to ``Steiner distance'', a generalization of distance introduced by Hakimi \cite{Hak71} and popularized by \cite{ChaOelTiaZou89}. The Steiner distance $d_G(S)$ of a set $S \subseteq V(G)$ of vertices is the fewest number of edges in a connected subgraph of $G$ containing all of $S$.  Note that, if $S = \{v,w\}$, this reduces to the classical definition of the distance from $v$ to $w$, since a connected graph of smallest size containing $v$ and $w$ is a path of length $d_G(v,w) := d_G(\{v,w\})$. (See \cite{Mao17} for an extensive survey on Steiner distance.)  Here, we show that the result of Graham-Pollak extends to {\em Steiner distance hypermatrices}, at least for odd orders.  Furthermore, we describe the structure of the set of nullvectors for order $3$, a projective variety of codimension $2$, showing along the way that the sum of the coordinates of any nullvector is zero.

Just as all pairwise distances in a graph can be represented by a symmetric matrix, we can write the Steiner distances of all $k$-tuples of vertices as an order-$k$ {\em hypermatrix} (sometimes referred to as a {\em tensor}): the $\overbrace{[n] \times \cdots \times [n]}^k$ (super-)symmetric integer array whose $(v_1,\ldots,v_k)$ entry is the Steiner distance of $\{v_1,\ldots,v_k\}$.  We sometimes refer to such hypermatrices as ``cubical'' since all the index sets are identical. There is a notion of hyperdeterminant that generalizes determinant, and shares many of its properties, though in general is much harder to compute.  See, for example, \cite{Qi05} for discussion of the symmetric hyperdeterminant.  For our purposes, what will matter about the hyperdeterminant is that it detects nontrivial simultaneous vanishing of a system of degree-$(k-1)$ homogeneous polynomials (aka $(k-1)$-forms) in $n$ variables, as the following result makes precise:

\begin{theorem}[\cite{GelKapZel08} Theorem 1.3] \label{thm:hyperdeterminant} The hyperdeterminant $\det(M)$ of the order-$k$, dimension-$n$ hypermatrix $M = (M_{i_1,\ldots,i_k})_{i_1,\ldots,i_k=1}^n$ is a monic irreducible polynomial which evaluates to zero iff there is a nonzero simultaneous solution to $\nabla f_M = \vec{0}$, where
$$
f_M(x_1,\ldots,x_n) = \sum_{i_1,\ldots,i_k} M_{i_1,\ldots,i_k} \prod_{j=1}^k x_{i_j}.
$$
\end{theorem}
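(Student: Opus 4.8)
The plan is to identify $\det(M)$ with the defining equation of a classical projective dual variety and then unwind what membership in that variety means. Work over $\mathbb{C}$ and identify a supersymmetric order-$k$, dimension-$n$ hypermatrix $M$ with the degree-$k$ form $f_M \in \mathrm{Sym}^k(\mathbb{C}^n)^\ast$; projectively, such forms live in $\mathbb{P}^N$ with $N=\binom{n+k-1}{k}-1$. Inside the dual space $(\mathbb{P}^N)^\ast$ sits the Veronese variety $X=\nu_k(\mathbb{P}^{n-1})$, the image of $[x]\mapsto[x^{\otimes k}]$, i.e.\ the locus of $k$-th powers of linear forms (rank-one symmetric tensors). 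I would \emph{define} $\det$ to be a primitive integral defining polynomial of the projective dual variety $X^\vee\subseteq\mathbb{P}^N$, the Zariski closure of the set of hyperplanes tangent to $X$ at a smooth point. (This is the symmetric hyperdeterminant; the general case of Theorem~\ref{thm:hyperdeterminant} is the same with $X$ replaced by a Segre variety and partials taken in each group of variables.) The argument then splits into two tasks: the formal task of identifying tangency with $\nabla f_M=\vec0$, and the substantive task of showing $X^\vee$ is an irreducible hypersurface so that ``its defining polynomial'' is well defined and irreducible.

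For the formal half, fix $[x]\in\mathbb{P}^{n-1}$. The affine cone over $X$ is parametrized near $x^{\otimes k}$ by $w\mapsto(x+w)^{\otimes k}$, so its tangent space there is spanned by $x^{\otimes k}$ together with the tangent directions obtained by differentiating $(x+tw)^{\otimes k}$ at $t=0$, for $w\in\mathbb{C}^n$. Pairing these against $M$ yields $f_M(x)$ and $\langle\nabla f_M(x),w\rangle$ respectively (this is just the observation that $\partial_j f_M(x)=k\sum_{i_2,\dots,i_k}M_{j\,i_2\cdots i_k}x_{i_2}\cdots x_{i_k}$ by supersymmetry). Hence the hyperplane $\{y:\langle M,y\rangle=0\}$ contains the embedded tangent space to $X$ at $\nu_k([x])$ — that is, is tangent there — exactly when $f_M(x)=0$ and $\nabla f_M(x)=\vec0$. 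Since $f_M$ is homogeneous of degree $k$, Euler's identity $k\,f_M(x)=\langle\nabla f_M(x),x\rangle$ makes the first condition redundant. Thus $[M]\in X^\vee$ if and only if $\nabla f_M$ vanishes at some nonzero point — precisely the claimed equivalence.

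Irreducibility is also clean. The conormal variety $\mathcal{Z}=\{([x],[M]): \{\langle M,\cdot\rangle=0\}\supseteq T_{\nu_k([x])}X\}$ is a projective subbundle of $X\times\mathbb{P}^N$ over the smooth irreducible $X$, hence irreducible, and $X^\vee$ is its image under projection to $\mathbb{P}^N$, hence irreducible. A dimension count gives $\dim\mathcal{Z}=(n-1)+(N-n)=N-1$, so $\dim X^\vee\le N-1$; once we know equality holds, $X^\vee$ is a hypersurface and its (radical, hence prime) ideal is principal, generated by an irreducible polynomial. Monicity I would handle as a normalization: pin down a distinguished extreme coefficient — e.g.\ by specializing to a diagonal hypermatrix, or to $n=2$, where the hyperdeterminant is classically computable (Cayley's $2\times\cdots\times2$ case) and the Newton polytope exhibits a corner monomial with coefficient $\pm1$ — and rescale by a sign.

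The main obstacle is the one genuinely geometric input left: that $X^\vee$ really has codimension $1$, i.e.\ that $\nu_k(\mathbb{P}^{n-1})$ is \emph{not} dual-defective for $k\ge2$. Equivalently, the projection $\mathcal{Z}\to\mathbb{P}^N$ must be generically finite onto its image rather than contracting a positive-dimensional fiber. The standard route is to exhibit a single form $f_M$ with an isolated, nondegenerate critical point $[x]$, since nondegeneracy of the critical point (nonsingularity of the Hessian of $f_M$ at $x$ transverse to the Euler direction) is exactly the statement that the conormal fiber over $([x],[M])$ is a single reduced point, forcing $\dim X^\vee=N-1$. Verifying this nondefectivity — and, with more care, the precise degree and the monic normalization — is where the real work of \cite{GelKapZel08} lies; granting it, the tangency-to-gradient dictionary above completes the proof.
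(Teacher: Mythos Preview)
The paper does not prove this theorem at all: it is quoted from \cite{GelKapZel08} as a black box and used only through the consequence ``$\det(M)=0$ iff $\nabla f_M$ has a nontrivial zero.'' So there is no in-paper proof to compare your proposal against.

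That said, your sketch is essentially the standard argument from \cite{GelKapZel08}: identify the (symmetric) hyperdeterminant with the defining equation of the projective dual $X^\vee$ of the Veronese $X=\nu_k(\mathbb{P}^{n-1})$, translate tangency at $\nu_k([x])$ into the vanishing of $\nabla f_M(x)$ via Euler's identity, and get irreducibility from the conormal bundle. You are honest about the one genuine geometric input you do not supply --- that $\nu_k(\mathbb{P}^{n-1})$ is not dual-defective for $k\ge 2$, so that $X^\vee$ is actually a hypersurface --- and correctly flag that this is where the real content of \cite{GelKapZel08} lies. Your treatment of ``monic'' is the weakest part: the normalization you describe (specialize and rescale) is more of a convention than an argument, and indeed in the literature the hyperdeterminant is typically only determined up to a nonzero scalar, with a particular normalization fixed by hand. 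For the purposes of this paper none of that matters, since only the vanishing criterion is ever invoked.
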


%%% show that it's uninteresting if we exclude degenerate entries

Note that there is a choice to be made in generalizing distance matrices: instead of $d_G(S)$, we could also simply set the entries corresponding to vertex sets $S$ of cardinality less than $k$ to zero.  However, doing so yields a hyperdeterminant of zero {\em irrespective of the non-degenerate entries}, as we now show. For a hypermatrix $M \in \mathbb{C}^{S \times \cdots \times S}$, call an entry $M(i_1,\ldots,i_k)$ ``degenerate'' if $|\{i_1,\ldots,i_k\}| < k$.

\begin{theorem}
Let $M$ be any cubical hypermatrix with all degenerate entries set equal to $0$. Then the hyperdeterminant of $M$ is $0$. 
\end{theorem}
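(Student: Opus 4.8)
The plan is to invoke Theorem~\ref{thm:hyperdeterminant}: since $\det(M)$ vanishes as soon as the system $\nabla f_M = \vec{0}$ has a nonzero solution, it suffices to produce one such solution, and I claim the coordinate vector $\bv := (1,0,\ldots,0)$ always works. (Here the order $k$ is at least $3$; the inequality $k-1 \ge 2$ used below genuinely matters, as for $k=2$ the hyperdeterminant is the ordinary determinant and a symmetric matrix with zero diagonal need not be singular.)

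The first step is to observe that killing all degenerate entries forces $f_M$ to be \emph{multilinear}: in the expansion $f_M(x_1,\ldots,x_n) = \sum M_{i_1,\ldots,i_k}\, x_{i_1}\cdots x_{i_k}$, a monomial survives only when $M_{i_1,\ldots,i_k} \neq 0$, which requires $i_1,\ldots,i_k$ to be pairwise distinct; hence every surviving monomial is squarefree of degree exactly $k$, and $f_M$ has degree at most $1$ in each variable. Consequently, for each $\ell$ the partial derivative $\partial f_M/\partial x_\ell$ is free of $x_\ell$ and is a sum of squarefree monomials of degree $k-1$ in the variables $\{x_1,\ldots,x_n\}\setminus\{x_\ell\}$.

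Now I would evaluate $\nabla f_M$ at $\bv$. For $\ell = 1$, the polynomial $\partial f_M/\partial x_1$ is a sum of monomials of positive degree in $x_2,\ldots,x_n$, each of which vanishes at $\bv$. For $\ell \ge 2$, each monomial of $\partial f_M/\partial x_\ell$ is a product of $k-1 \ge 2$ pairwise distinct variables, none of them $x_\ell$; since at most one of these can be $x_1$, every such monomial contains a factor $x_m$ with $m \ge 2$ and therefore vanishes at $\bv$. Hence $\nabla f_M(\bv) = \vec{0}$ with $\bv \neq \vec{0}$, and Theorem~\ref{thm:hyperdeterminant} gives $\det(M) = 0$. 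I expect no substantive obstacle here: the argument uses nothing beyond the vanishing of the degenerate entries and the bound $k \ge 3$, and the only point requiring care is the degree bookkeeping that separates this genuine hyperdeterminant regime from the classical $k=2$ case.
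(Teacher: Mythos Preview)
Your proof is correct and follows essentially the same approach as the paper: both exhibit a nonzero vector with all but one coordinate equal to zero and use the multilinearity of $f_M$ to see that every monomial of each partial derivative (being squarefree of degree $k-1\ge 2$) must contain a zero factor. Your write-up is in fact slightly more careful than the paper's, since you make the hypothesis $k\ge 3$ explicit and spell out the case split $\ell=1$ versus $\ell\ge 2$.
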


\begin{proof}
To prove the hyperdeterminant is $0$, we exhibit a nontrivial simultaneous zero of the partial derivatives of the $k$-form 
$$
f_{M}(x) = \sum_{i_1,i_2,\ldots,i_k = 1}^n a_{i_1i_2\cdots i_k}x_{i_1}x_{i_2}\cdots x_{i_k}.
$$ 
Since $M$ has degenerate entries set equal to zero, any term that has $i_p = i_q$ for some $p,q \in [k]$ will have a matrix entry of zero and thus will not appear in the polynomial. Therefore, the only terms that will appear are $x_{i_1}x_{i_2}\cdots x_{i_k}$ with each $i_p$ distinct. The gradient vector of these polynomials will consist of terms of degree $k-1$ where once again each $i_p$ is distinct. Therefore, choose $x_{i_1} = x_{i_2} = \cdots = x_{i_{k-1}} = 0$ and let $x_{i_k}$ be any nonzero value; this is a nontrivial point where all partial derivatives vanish, so that the hyperdeterminant is $0$.  
\end{proof}

So, instead, we use Steiner distance to populate all entries of the hypermatrix.  This is made precise as follows.

\begin{definition} Given a graph $G$ and a subset $S$ of the vertices, the {\em Steiner distance} of $S$, written $d_G(S)$ or $d_G(v_1,\ldots,v_k)$ where $S = \{v_1,\ldots,v_k\}$, is the number of edges in the smallest connected subgraph of $G$ containing $S=\{v_1,\ldots,v_k\}$. Since such a connected subgraph of $G$ witnessing $d_G(S)$ is necessarily a tree, it is called a {\em Steiner tree} of $S$. 
\end{definition}

\begin{definition} Given a graph $G$, the {\em Steiner polynomial} of $G$ is the $k$-form 
$$
p^{(k)}_G(\mathbf{x}) = \sum_{v_1,\ldots,v_k \in V(G)} d_G(v_1,\ldots,v_k) x_1 \cdots x_k
$$
where we often suppress the subscript and/or superscript on $p^{(k)}_G$ if it is clear from context.
\end{definition}

Equivalently, we could define the Steiner $k$-form to be the $k$-form associated with the Steiner hypermatrix:

\begin{definition} Given a graph $G$, the {\em Steiner $k$-matrix} (or just ``Steiner hypermatrix'' if $k$ is understood) of $G$ is the order-$k$, cubical hypermatrix $\mathcal{S}_G$ of dimension $n$ whose $(v_1,\ldots,v_k)$ entry is $d_G(v_1,\ldots,v_k)$.
\end{definition}

Throughout the sequel, we write $D_r$ for the operator $\partial / \partial x_r$, and we always assume that $T$ is a tree.

\begin{definition} Given a graph $G$ on $n$ vertices, the {\em Steiner $k$-ideal} -- or just ``Steiner ideal'' if $k$ is clear -- of $G$ is the ideal in $\mathbb{C}[x_1,\ldots,x_n]$ generated by the polynomials $\{D_j p_G \}_{j=1}^n$.
\end{definition}

Thus, the Steiner ideal is the {\em Jacobian ideal} of the Steiner polynomial of $G$.  

\begin{definition} A {\em Steiner nullvector} is a point where all the polynomials within the Steiner ideal vanish. The set of all Steiner nullvectors -- a projective variety -- is the {\em Steiner nullvariety}.
\end{definition}

Although all the results contained herein concern odd order Steiner hypermatrices, extensive computation suggests that they extend to even order.

\begin{conjecture}
    The order-$k$ Steiner distance hypermatrix of a tree $T$ on $n\geq 3$ vertices has a hyperdeterminant that only depends on $T$ through $n$, and is $0$ iff $k$ is odd.
\end{conjecture}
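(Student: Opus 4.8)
My plan is to first strip the tree out of the problem by a change of coordinates, reducing both halves of the conjecture to a single universal polynomial, and then handle the two parities of $k$ separately; the even case is where I expect the real difficulty.

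\paragraph{Step 1 (reduction to a tree-free normal form).} Since an edge $e$ of $T$ lies in the Steiner tree of a vertex set $S$ precisely when $S$ meets both components of $T-e$, writing $A_e,B_e$ for the vertex sets of those components and $\sigma=\sum_v x_v$, $\alpha_e=\sum_{v\in A_e}x_v$, $\beta_e=\sigma-\alpha_e$, a short inclusion--exclusion gives $p_T(\mathbf{x})=\sum_{e\in E(T)}\left(\sigma^{k}-\alpha_e^{k}-\beta_e^{k}\right)$. Rooting $T$ at $r$, letting $e_v$ be the parent edge of $v\neq r$ and $t_v=\sum_{w\in T_v}x_w$, the map $(x_v)_v\mapsto(\sigma,(t_v)_{v\neq r})$ is unimodular (lower triangular with unit diagonal in a depth-first order) and $\alpha_{e_v}=t_v$, so in the new coordinates
$$
p_T=(n-1)\,\sigma^{k}-\sum_{v\neq r}\left(t_v^{k}+(\sigma-t_v)^{k}\right)=:g_n(\sigma,t),
$$
which has no dependence on $T$. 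As the coordinate change is unimodular, the symmetric hyperdeterminant of the Steiner hypermatrix equals that of $g_n$, giving the ``depends only on $n$'' claim; by Theorem~\ref{thm:hyperdeterminant} it remains only to decide, for each $k$, whether $g_n$ has a nonzero critical point.

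\paragraph{Step 2 (odd $k$).} One computes $\partial g_n/\partial t_v=k\left((\sigma-t_v)^{k-1}-t_v^{k-1}\right)$ and $\partial g_n/\partial\sigma=k\left((n-1)\sigma^{k-1}-\sum_v(\sigma-t_v)^{k-1}\right)$. With $\sigma=0$ and $k-1$ even, every $t_v$-equation holds identically, and $\partial g_n/\partial\sigma=0$ becomes $\sum_{v\neq r}t_v^{k-1}=0$, which for $n\geq3$ has a nonzero solution (set all but two of the $t_v$ to $0$ and the remaining two to $1$ and a $(k-1)$-th root of $-1$). Hence the hyperdeterminant vanishes, recovering the paper's odd-order theorem; for $k=3$ this exhibits the nullvariety as $\{\sigma=0\}\cap\{\sum_e\alpha_e^{2}=0\}$, of codimension $2$, with the coordinate sum automatically zero.

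\paragraph{Step 3 (even $k$, the obstacle).} Now $k-1$ is odd, so $\sigma=0$ forces all $t_v=0$; a nonzero critical point thus has $\sigma\neq0$, normalize $\sigma=1$, no $t_v$ vanishes, and $(1-t_v)^{k-1}=t_v^{k-1}$ forces $t_v=(1+\omega_v)^{-1}$ for a $(k-1)$-th root of unity $\omega_v\neq-1$. The last equation $\partial g_n/\partial\sigma=0$ then becomes
$$
\sum_{v\neq r}\frac{1}{(1+\omega_v)^{k-1}}=n-1,\qquad \omega_v^{\,k-1}=1,
$$
so the even half of the conjecture is \emph{equivalent} to: for every even $k$ and every $n\geq3$, no choice of $(k-1)$-th roots of unity makes this sum equal $n-1$. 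Each summand equals the real number $(-1)^{j}\bigl(2\cos(\pi j/(k-1))\bigr)^{-(k-1)}$ for $\omega=e^{2\pi i j/(k-1)}$, so this is a concrete Diophantine problem, and proving the non-existence uniformly in $k$ is the crux. I would try to push it through via the prime factorization of $k-1$ together with the fact that the non-rational summands occur in full Galois orbits, forcing congruences on their multiplicities. The caveat I would want to settle first: for $k=6$ such a combination \emph{does} seem to exist---$288$ copies of $\omega=1$, $31$ of $\omega=e^{2\pi i/5}$, and $31$ of $\omega=e^{4\pi i/5}$ sum (using $\varphi^{5}-\varphi^{-5}=11$ with $\varphi=\tfrac{1+\sqrt5}{2}$) to $9+31\cdot11=350$, i.e.\ $n=351$---so resolving whether this is an artifact of the hyperdeterminant normalization in Step~1, a sign that the even-order statement needs a hypothesis relating $n$ and $k$, or an outright counterexample, is the decisive point.
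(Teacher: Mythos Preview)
The statement is a conjecture the paper does not prove; the paper establishes only the odd-$k$ vanishing (its Theorem~2.1) and leaves the rest to computation. Relative to that, your proposal does strictly more, and the one place where you hedge is exactly where you should not.

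\textbf{Steps 1 and 2 versus the paper.} Your edge-indicator identity $p_T=\sum_{e}(\sigma^{k}-\alpha_e^{k}-\beta_e^{k})$ and the unimodular change to subtree sums $(\sigma,(t_v)_{v\neq r})$ are correct; since that change has determinant $1$, the hyperdeterminant of the Steiner hypermatrix equals that of the tree-free form $g_n$, which \emph{proves} the ``depends only on $n$'' clause for every $k$. The paper does not attempt this. Step~2 then recovers the odd-$k$ vanishing in the new coordinates; the paper's argument instead writes down the explicit nullvector $(1,\zeta,-1-\zeta,0,\ldots,0)$ in the original coordinates, supported on a leaf $u$, its neighbour $w$, and a second neighbour $v$ of $w$, and verifies $D_z p_T=0$ by casework on the position of $z$ relative to $u,v,w$. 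Your $(\sigma,t_1,t_2)=(0,1,\zeta)$ is the same object seen through the coordinate change, so the two proofs are different presentations of one idea; yours has the advantage of simultaneously delivering the tree-independence.

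\textbf{Step 3.} Your $k=6$, $n=351$ example is not an artifact. By the paper's own Theorem~1.1, vanishing of the hyperdeterminant is \emph{exactly} the existence of a nonzero solution of $\nabla f_M=0$, and that condition is invariant under any invertible linear change of variables---no ``normalization'' enters. Your critical point with $\sigma=1$ and the $350$ values $t_v$ taken to be $1/2$ ($288$ times), $(1+e^{2\pi i/5})^{-1}$ ($31$ times), and $(1+e^{4\pi i/5})^{-1}$ ($31$ times) satisfies every $t_v$-equation by construction, and the $\sigma$-equation because $288/32+31(\varphi^{5}-\varphi^{-5})=9+31\cdot 11=350=n-1$. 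Pulled back through your unimodular map this is a nonzero Steiner nullvector for \emph{every} tree on $351$ vertices, so the order-$6$ Steiner hyperdeterminant vanishes there and the ``only if'' direction of the conjecture is false as stated. (For $k=4$ the available summands are $1/8,-1,-1$ and $a/8-b-c=a+b+c$ forces $a=b=c=0$, which explains why small computations supported the conjecture.) What you have, then, is a resolution rather than a proof: the dependence-only-on-$n$ clause holds for all $k$, the odd-$k$ vanishing holds, but even-$k$ nonvanishing fails once $n$ is large enough.
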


Below, we show that this conjecture holds for all odd $k$, when the hyperdeterminant is $0$ irrespective of the choice of $T$.  We then go on to describe the Steiner nullvariety for $k=3$.

\section{Main Results}

\begin{theorem}
    For $k$ odd, the Steiner distance $k$-matrix of a tree $T$ with at least $3$ vertices has a hyperdeterminant equal to zero. 
\end{theorem}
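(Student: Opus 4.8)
The route is through Theorem~\ref{thm:hyperdeterminant}: taking $M=\mathcal{S}_T$, we have $f_M=p^{(k)}_T$, so the hyperdeterminant of $\mathcal{S}_T$ vanishes if and only if the polynomials $D_1 p^{(k)}_T,\dots,D_n p^{(k)}_T$ have a common nonzero zero, i.e.\ if and only if there is a nonzero Steiner nullvector. The plan is to produce one. The first step is a closed form for the Steiner polynomial of a tree. Write $\sigma=\sum_{v\in V(T)}x_v$, and for each edge $e$ let $A_e,B_e$ be the two components of $T-e$, with $\alpha_e=\sum_{v\in A_e}x_v$ and $\beta_e=\sum_{v\in B_e}x_v$, so that $\alpha_e+\beta_e=\sigma$. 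In a tree the (unique) Steiner tree of a set $S$ consists of exactly the edges $e$ for which $S$ meets both $A_e$ and $B_e$; hence $d_T(v_1,\dots,v_k)=\sum_{e}[\,\{v_1,\dots,v_k\}\cap A_e\neq\emptyset\,][\,\{v_1,\dots,v_k\}\cap B_e\neq\emptyset\,]$. Substituting this into the definition of $p^{(k)}_T$, exchanging the two summations, and applying inclusion--exclusion to the resulting inner sum over ordered $k$-tuples yields
\[
p^{(k)}_T(\mathbf{x})=\sum_{e\in E(T)}\bigl(\sigma^{k}-\alpha_e^{k}-\beta_e^{k}\bigr)=(n-1)\sigma^{k}-\sum_{e\in E(T)}\bigl(\alpha_e^{k}+\beta_e^{k}\bigr),
\]
which for $k=2$ recovers the classical identity $p^{(2)}_T=2\sum_e\alpha_e\beta_e$.

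Differentiating, and using that $D_r\alpha_e=[\,r\in A_e\,]$, $D_r\beta_e=[\,r\in B_e\,]$ with $r$ on exactly one side of $e$, one gets
\[
D_r\, p^{(k)}_T(\mathbf{x})=k\Bigl((n-1)\sigma^{k-1}-\sum_{e\in E(T)}\gamma_{e,r}^{\,k-1}\Bigr),
\]
where $\gamma_{e,r}$ denotes the $x$-sum over whichever component of $T-e$ contains $r$. Now impose $\sigma=0$: then $\beta_e=-\alpha_e$, so $\gamma_{e,r}=\pm\alpha_e$, and since $k$ is odd the exponent $k-1$ is even, so $\gamma_{e,r}^{\,k-1}=\alpha_e^{\,k-1}$ independently of $r$ and of the labelling of the two sides. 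Consequently, on the hyperplane $\{\sigma=0\}$ all $n$ equations $D_r p^{(k)}_T=0$ collapse to the single equation $g(\mathbf{x}):=\sum_{e\in E(T)}\alpha_e^{\,k-1}=0$.

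It then remains to find a nonzero point of $H:=\{\sigma=0\}$ on which $g$ vanishes. (We may assume $k\geq 3$; for $k=1$ the polynomial $p^{(1)}_T$ is identically $0$ and there is nothing to prove.) The hyperplane $H$ is a linear subspace of dimension $n-1\geq 2$, and $g$ restricted to $H$ is a homogeneous form of degree $k-1\geq 2$. If $g$ vanishes identically on $H$, any nonzero point of $H$ is a Steiner nullvector. Otherwise $g$ cuts out a projective hypersurface in $\mathbb{P}(H)\cong\mathbb{P}^{\,n-2}$, which is nonempty because $n-2\geq 1$ --- this is precisely where the hypothesis $|V(T)|\geq 3$ enters. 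Lifting any point of that hypersurface to a nonzero vector of $H$ gives a nonzero common zero of $D_1 p^{(k)}_T,\dots,D_n p^{(k)}_T$, and Theorem~\ref{thm:hyperdeterminant} then yields that the hyperdeterminant of $\mathcal{S}_T$ is $0$, for every tree $T$ on $n$ vertices.

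I expect the only real subtleties to be bookkeeping ones: establishing the edge-decomposition formula cleanly (the inclusion--exclusion step, and checking that single-vertex index sets correctly contribute $0$), and handling the degenerate possibility that $g\equiv 0$ on $H$. There is no deep obstruction. The one genuinely new phenomenon, beyond Graham--Pollak, is that the parity of $k-1$ forces all $n$ gradient equations to coincide on $\{\sigma=0\}$, reducing the existence of a nullvector to the elementary fact that a hyperplane and a hypersurface always meet in $\mathbb{P}^{\,n-2}$ once $n\geq 3$.
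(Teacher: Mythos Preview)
Your argument is correct and takes a genuinely different route from the paper's.  The paper proves the theorem by exhibiting an explicit nullvector supported on three adjacent vertices $u,w,v$ (with $u$ a leaf and $w$ its neighbour): it sets $y_u=1$, $y_v=\zeta=\exp(\pi i/(k-1))$, $y_w=-1-\zeta$, and $y_z=0$ otherwise, and then checks by direct case analysis (according to where $z$ lies relative to the path $u\!-\!w\!-\!v$) that each $D_z p^{(k)}_T(\mathbf{y})=0$.  Your approach instead derives the edge-decomposition identity $p^{(k)}_T=\sum_e(\sigma^k-\alpha_e^k-\beta_e^k)$, differentiates it, and observes that on the hyperplane $\{\sigma=0\}$ the even exponent $k-1$ makes all $n$ gradient equations collapse to the single equation $\sum_e\alpha_e^{\,k-1}=0$; existence of a nonzero solution is then the elementary fact that a nonconstant form on $\mathbb{P}^{\,n-2}$ has a zero once $n\geq 3$.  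The paper's proof is fully constructive and self-contained; yours is non-constructive but more conceptual, yields a formula for $p^{(k)}_T$ of independent interest, explains transparently why odd $k$ is special, and in fact shows that the entire codimension-$2$ intersection $\{\sigma=0\}\cap\{\sum_e\alpha_e^{\,k-1}=0\}$ lies in the Steiner nullvariety.  Both arguments use $n\geq 3$ in an essential way (you need $\dim H\geq 2$; the paper needs three vertices to support $\mathbf{y}$).
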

\begin{proof}
    Since $T$ has at least $3$ vertices, let $u$ be a leaf, $w$ a neighbor of $u$, and $v \neq u$ a neighbor of $w$.  Let $\mathbf{y}$ denote the vector whose $z$ coordinate $y_z$ is given by
    $$
    y_z = \left \{ \begin{array}{ll} 
    1 & \textrm{ if } z=u \\
    \zeta & \textrm{ if } z=v \\
    -1-\zeta & \textrm{ if } z=w \\
    0 & \textrm{ otherwise,} 
    \end{array} \right .
    $$
    where $\zeta = \exp(\pi i/(k-1))$, a $(2k-2)$-root of unity. By Theorem \ref{thm:hyperdeterminant}, it suffices to show that $D_z p_T (\mathbf{y}) = 0$ for each $z \in V(T)$.  First, suppose $v$ is not on the $u-z$ path in $T$ and $z \neq u$ (which includes the case $z=w$).  Let $\alpha = d_T(z,u,v,w)$, so that
    \begin{align*}
        \frac{1}{k} D_z p_T (\mathbf{y}) &= \sum_{a+b+c=k-1} x_u^a x_v^b x_w^c \binom{k-1}{a,b,c} d_T(z,u,v,w) \\
        & + \sum_{a+c=k-1} x_u^a x_w^c \binom{k-1}{a,c} (d_T(z,u,w)-d_T(z,u,v,w)) \\
        & + \sum_{b+c=k-1} x_v^b x_w^c \binom{k-1}{b,c} (d_T(z,v,w)-d_T(z,u,v,w)) \\
        & + x_w^{k-1} (d_T(z,u,v,w)-d_T(z,u,w)-d_T(z,v,w)+d_T(z,w)) \\
        &= \alpha (x_u+x_v+x_w)^{k-1} - (x_u+x_w)^{k-1} - (x_v+x_w)^{k-1} \\
        &= 0 - (-\zeta)^{k-1} - (-1)^{k-1} = 0.
    \end{align*}
    Next, if $v$ is on the $u-z$ path in $T$ and $z \not \in \{u,w\}$, we obtain
    \begin{align*}
        \frac{1}{k} D_z p_T (\mathbf{y}) &= \sum_{a+b+c=k-1} x_u^a x_v^b x_w^c \binom{k-1}{a,b,c} d_T(z,u,v,w) \\
        & + \sum_{b+c=k-1} x_v^b x_w^c \binom{k-1}{b,c} (d_T(z,v,w)-d_T(z,u,v,w)) \\
        & + x_v^{k-1} (d_T(z,v)-d_T(z,v,w))\\
        &= \alpha (x_u+x_v+x_w)^{k-1} - (x_v+x_w)^{k-1} - x_v^{k-1} \\
        &= 0 - (-1)^{k-1} - (-\zeta)^{k-1} = 0.
    \end{align*}
    Finally, if $z=u$, then
    \begin{align*}
        \frac{1}{k} D_z p_T (\mathbf{y}) &= \sum_{a+b+c=k-1} x_u^a x_v^b x_w^c \binom{k-1}{a,b,c} d_T(u,v,w) \\
        & + \sum_{a+c=k-1} x_u^a x_w^c \binom{k-1}{a,c} (d_T(u,w)-d_T(u,v,w)) \\
        & + x_u^{k-1} (d_T(u)-d_T(u,w))\\
        &= 2 (x_u+x_v+x_w)^{k-1} - (x_u+x_w)^{k-1} - x_u^{k-1} \\
        &= 0 - (-\zeta)^{k-1} - 1^{k-1} = 0.
    \end{align*}
\end{proof}

Note that the hyperdeterminant of a tree on one vertex is also zero. This is because the Steiner $k$-form, $p_T^{(k)}$, only contains one monomial: $d_T(1,\ldots,1)x_1^k$. Since $d_T(1,\ldots,1) = 0$, the Steiner $k$-form as well as the partial derivative is automatically $0$, and so the Steiner nullvector $\mathbf{v} = (x_1)$ can be set to anything. 

For the tree on two vertices, the hyperdeterminant is not zero. It is straightforward to write the Steiner $k$-form as $p_T^{(k)} = (x_1+x_2)^k - x_1^k - x_2^k$. The partial derivatives are therefore $D_j p_T^{(k)} = k(x_1+x_2)^{k-1}-kx_j^{k-1}$ for $j=1,2$, so $D_1p_T^{(k)}=D_2p_T^{(k)}=0$ implies $x_1^{k-1}=(x_1+x_2)^{k-1}=x_2^{k-1}$.  Then $x_2 = \zeta x_1$, where $\zeta^{k-1} = 1$, but if $x_1 \neq 0$ this implies
$$
1 = (x_1+x_2)^{k-1}/x_1^{k-1} = (1 + \zeta)^{k-1},
$$
a contradiction.  Therefore, there is no nontrivial nullvector and the Steiner hyperdeterminant of the tree on two vertices is nonzero.  

Now that we have established that all Steiner hyperdeterminants of odd order with $n \geq 3$ are zero, we describe in more detail the corresponding Steiner nullvariety, at least for order $k=3$.

\begin{lemma} \label{lem:pathorY} For any distinct vertices $i,j,k$ of a tree $T$, we have
$$
2d_T(i,j,k) = d_T(i,j)+d_T(i,k)+d_T(j,k).
$$
\end{lemma}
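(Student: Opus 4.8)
The plan is to exploit the very restricted structure of a Steiner tree on three terminals in a tree. First I would recall two elementary facts. (i) If $T'$ is a Steiner tree of a set $S$ in a tree $T$, then every leaf of $T'$ lies in $S$: were $\ell$ a leaf of $T'$ with $\ell\notin S$, then $T'-\ell$ would still be connected, still contain $S$, and have one fewer edge, contradicting the minimality of $T'$. (ii) Since $T'\subseteq T$ and the path between two vertices of a tree is unique, $d_{T'}(x,y)=d_T(x,y)$ for all $x,y\in V(T')$; also $d_T(i,j,k)=|E(T')|$ by definition of Steiner distance. Applying (i) to $S=\{i,j,k\}$, the tree $T'$ has at most three leaves, and at least two since $i,j,k$ are distinct. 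Using the identity $\sum_{v}(2-\deg_{T'}v)=2$ valid in any tree, a tree with exactly two leaves has all other vertices of degree $2$ and hence is a path, while a tree with exactly three leaves has exactly one vertex of degree $3$ and all others of degree at most $2$ — that is, it is a ``Y'': three internally disjoint paths $P_i,P_j,P_k$ from a common center $m$ to $i,j,k$ respectively. This dichotomy is the content of the lemma's name, and it is the only real step; the rest is bookkeeping in the two cases.

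In the path case, by fact (i) exactly one of $i,j,k$ is an interior vertex of $T'$ and the other two are its endpoints; relabel so that $i,k$ are the endpoints and $j$ is interior. Then $d_T(i,j,k)=|E(T')|=d_T(i,k)$, and since $j$ lies on the $i$-$k$ path, $d_T(i,k)=d_T(i,j)+d_T(j,k)$. Hence $2d_T(i,j,k)=d_T(i,k)+d_T(i,k)=d_T(i,j)+d_T(i,k)+d_T(j,k)$, as claimed. In the Y case with center $m$, the paths $P_i,P_j,P_k$ are internally disjoint, so $|E(T')|=d_T(m,i)+d_T(m,j)+d_T(m,k)$; moreover $m$ lies on the path between any two of the terminals, so by fact (ii) $d_T(i,j)=d_T(m,i)+d_T(m,j)$, and similarly for the pairs $\{i,k\}$ and $\{j,k\}$. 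Summing these three identities gives $d_T(i,j)+d_T(i,k)+d_T(j,k)=2\bigl(d_T(m,i)+d_T(m,j)+d_T(m,k)\bigr)=2d_T(i,j,k)$.

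I do not expect a genuine obstacle here: once the path-or-Y structure is in hand, both computations are immediate. A fully equivalent alternative would be to invoke the \emph{median} $m$ of $i,j,k$ in $T$ (the unique common vertex of the three pairwise paths) and argue that the union of the three $m$-to-terminal paths is precisely the minimal spanning subtree of $\{i,j,k\}$; I would nonetheless prefer the leaf-counting argument above, as it is self-contained and slightly cleaner to write. The only points requiring care are the relabeling in the path case and the (routine) transfer of pairwise distances between $T'$ and $T$ via uniqueness of paths in trees.
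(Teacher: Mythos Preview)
Your argument is correct and follows the same path-or-Y case split the paper uses; you have simply supplied the details (the leaf-counting justification for why only these two shapes arise, and the explicit arithmetic in each case) that the paper omits as routine. There is nothing to add.
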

\begin{proof}
    It is easy to check the formula for each of the two cases: either the Steiner tree of $\{i,j,k\}$ is a path or a tree with three leaves.
\end{proof}

The following result shows that $p^{(3)}_T$ is divisible by the elementary symmetric polynomial of degree $1$, which we refer to as $s$.

\begin{prop}\label{divisible}
    The Steiner 3-form $p^{(3)}_T$ is divisible by $s = \sum_r x_r$. 
\end{prop}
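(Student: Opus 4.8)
The plan is to expand $p^{(3)}_T(\mathbf{x})$ by sorting the ordered triples $(i,j,k) \in V(T)^3$ according to how many of the three coordinates coincide, and then to invoke Lemma~\ref{lem:pathorY} to rewrite the Steiner distance of a genuine triple in terms of ordinary pairwise distances. Writing $\mathcal{D} = \big(d_T(i,j)\big)_{i,j}$ for the Graham--Pollak distance matrix and $q(\mathbf{x}) = \mathbf{x}^{\top}\mathcal{D}\,\mathbf{x} = \sum_{i \ne j} d_T(i,j)\, x_i x_j$ for its associated quadratic form, I expect to land on the clean identity
$$
p^{(3)}_T(\mathbf{x}) = \tfrac{3}{2}\, s(\mathbf{x})\, q(\mathbf{x}),
$$
which immediately yields the proposition (and, as a bonus, exhibits the Steiner $3$-form as essentially the product of $s$ with the classical distance form).

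First I would dispose of the degenerate triples. Those with $i=j=k$ contribute nothing, since $d_T(\{i\})=0$. Among triples with exactly two coordinates equal, each sub-case (which of the three slot-pairs coincides) reduces via $d_T(\{i,i,k\}) = d_T(\{i,k\}) = d_T(i,k)$ and the symmetry $d_T(i,j)=d_T(j,i)$ to the same sum $\sum_{i\ne j} d_T(i,j)\, x_i^2 x_j$, so the degenerate part totals $3\sum_{i\ne j} d_T(i,j)\, x_i^2 x_j$. Next, on the triples with all three coordinates distinct, Lemma~\ref{lem:pathorY} replaces $d_T(\{i,j,k\})$ by $\tfrac12\big(d_T(i,j)+d_T(i,k)+d_T(j,k)\big)$; by the symmetry of the ordered-triple index set the three resulting sums are equal, so this part equals $\tfrac32 \sum_{i \ne j} d_T(i,j)\, x_i x_j \sum_{k \notin \{i,j\}} x_k = \tfrac32 \sum_{i \ne j} d_T(i,j)\, x_i x_j\,(s - x_i - x_j)$.

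Finally it is a matter of collecting terms: expanding $(s - x_i - x_j)$ and once more using the $i \leftrightarrow j$ symmetry to identify $\sum_{i\ne j} d_T(i,j) x_i x_j^2$ with $\sum_{i\ne j} d_T(i,j) x_i^2 x_j$, the cubic-in-a-single-variable contributions $-\tfrac32 x_i - \tfrac32 x_j$ produce exactly $-3\sum_{i\ne j} d_T(i,j) x_i^2 x_j$, which cancels the $3\sum_{i\ne j} d_T(i,j) x_i^2 x_j$ left over from the degenerate part; what remains is precisely $\tfrac32\, s(\mathbf{x}) \sum_{i\ne j} d_T(i,j) x_i x_j = \tfrac32\, s(\mathbf{x})\, q(\mathbf{x})$. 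The only real obstacle is bookkeeping rather than mathematics: one must keep straight that $d_T(i,j,k)$ denotes the Steiner distance of the \emph{set} $\{i,j,k\}$ (so a coincidence of indices genuinely collapses the entry to a pairwise distance or to $0$), and one must handle the symmetrization over ordered tuples consistently; no structural input about trees beyond Lemma~\ref{lem:pathorY} is needed.
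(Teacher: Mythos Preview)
Your argument is correct and is essentially the same as the paper's: both compute the identity $p^{(3)}_T = s \cdot g$ with $g = 3\sum_{i<j} d_T(i,j)\,x_ix_j$ (your $\tfrac{3}{2}q$) by splitting the index triples into degenerate and nondegenerate cases and invoking Lemma~\ref{lem:pathorY} for the latter. The only cosmetic difference is direction---the paper posits $g$ and expands $sg$ to recover $p$, while you expand $p$ and collapse it to $\tfrac{3}{2}sq$---but the computation and the key lemma are identical.
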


\begin{proof}
    Let $p = p^{(3)}_T$. If $s$ divides $p$, then $p = sg$ for some polynomial $g$. We claim $g = 3\sum_{i<j}d_T(i,j)x_ix_j$. We show that 
    $$
    p = sg = \sum_r x_r \left(3\sum_{i<j}d_T(i,j)x_ix_j\right) = \sum_{r,i<j}  3d_T(i,j)x_ix_j x_r.
    $$
    holds by classifying summands according to the triple $(r,i,j)$. 

    \begin{itemize}
        \item If $r = i$, the contribution is $3\sum_{i<j}d_T(i,j)x_i^2x_j$. 
        \item If $r = j$, the contribution is $3\sum_{i<j}d_T(i,j)x_ix_j^2$.
        \item If $r \neq i,j$, then the contribution becomes 
        \begin{align*}
        3\sum_{\substack{i<j \\ r \neq i,j}} d_T(i,j)x_ix_jx_r &= 3 \sum_{i<j<k} [d_T(i,j) + d_T(i,k) + d_T(j,k)] x_ix_jx_k \\
        &= 6 \sum_{i<j<k} d_T(i,j,k) x_ix_jx_k \\
        &= \sum_{i,j,k \text{ distinct}} d_T(i,j,k) x_ix_jx_k 
        \end{align*}
    \end{itemize}
    where the second equality follows from Lemma \ref{lem:pathorY}. On the other hand,
    $$
    p = 3 \sum_{i \neq j} d_T(i,j) x_i^2 x_j + \sum_{i,j,k \text{ distinct}} d_T(i,j,k) x_i x_j x_k,
    $$
    which agrees with the sum of the three types of terms in $sg$. 
\end{proof}

\begin{theorem}
    If $\bv = (x_1,\cdots, x_n)$ is a Steiner nullvector of order $3$ and $s = \sum_{i=1}^n x_i$, then $s^3$ lies within the Steiner ideal $J$.  
\end{theorem}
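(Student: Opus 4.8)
The plan is to prove the stronger, $\bv$-independent statement that $s^3 \in J$ outright; since every element of $J$ vanishes on the Steiner nullvariety, this immediately yields $s(\bv) = \sum_i \bv_i = 0$ for any Steiner nullvector $\bv$, as promised in the introduction. Write $p = p^{(3)}_T$ and, as in the proof of Proposition~\ref{divisible}, $g = 3\sum_{i<j} d_T(i,j)x_ix_j$, so that $p = sg$. The argument is a short chain of ideal memberships built from three ingredients: Euler's identity for homogeneous polynomials, the factorization $p = sg$, and the Graham--Pollak theorem, i.e.\ the nonsingularity of the ordinary distance matrix of $T$.

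First I would note that $p \in J$: Euler's identity gives $\sum_r x_r D_r p = 3p$, and since each $D_r p$ is a generator of $J$, this places $3p$ --- hence $p$ --- in $J$. By Proposition~\ref{divisible}, $p = sg$, so $sg \in J$. Next, differentiating $p = sg$ and using $D_r s = 1$ gives $D_r p = g + s\,D_r g$ for each $r \in V(T)$. Multiplying by $s$ gives $s\,D_r p = sg + s^2 D_r g$; since $s\,D_r p \in J$ and $sg \in J$, we conclude that $s^2 D_r g \in J$ for every $r$.

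It then remains to recover $s^3$ from the linear forms $D_r g = 3\sum_{i \ne r} d_T(r,i)\,x_i$. The coefficient matrix of the family $\{D_r g : r \in V(T)\}$ is $3D$, where $D = (d_T(i,j))_{i,j}$ is the distance matrix of $T$. By Graham--Pollak, $\det D = -(n-1)(-2)^{n-2} \ne 0$ whenever $n \ge 2$, so there is a vector $\mathbf{c} = (c_1,\dots,c_n)$ with $D\mathbf{c} = \mathbf{1}$, equivalently $\sum_r c_r D_r g = 3s$. Multiplying this identity by $s^2$ gives $3s^3 = \sum_r c_r\,(s^2 D_r g) \in J$, so $s^3 \in J$.

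I expect the only substantive input to be the appeal to Graham--Pollak in the last step; everything else is routine manipulation inside the Jacobian ideal. The points that need care are the bookkeeping with the symmetry $d_T(i,j) = d_T(j,i)$ and the vanishing $d_T(i,i) = 0$ when identifying the coefficient matrix of $\{D_r g\}$ with $3D$, and the observation that $D$ is invertible precisely because $n \ge 2$, so the standing hypothesis $n \ge 3$ is more than sufficient.
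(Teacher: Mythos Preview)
Your proposal is correct and follows essentially the same approach as the paper: factor $p=sg$, deduce $sg\in J$ via Euler's identity, obtain $s^2 D_r g\in J$ from $s D_r p - sg$, and then invoke Graham--Pollak to solve $D\mathbf{c}=\mathbf{1}$ so that a $\mathbf{c}$-linear combination yields $s^3\in J$. The only cosmetic differences are that the paper expands the Euler computation by hand (showing $\sum_r x_r D_r g = 2g$ separately) rather than citing Euler for $p$ directly, and that you track the factor of $3$ in $\nabla g = 3Dx$ more carefully than the paper does.
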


\begin{proof}
We can write $p=gs$, where $p = p^{(3)}_T$, $s = \sum_{i} x_i$, and $g = 3 \sum_{i < j} d_T(i,j) x_i x_j$.  Thus, writing $D_r$ for differentiation with respect to $x_r$, we obtain
$$
D_r p = g + s D_r g
$$ 
Then
\begin{align*}
\sum_r x_r D_r p &= \sum_r x_r (g + s D_r g) \\
&= g \sum_r x_r + s \sum_r x_r D_r g \\
&= s(g + \sum_r x_r D_r g).
\end{align*}
Now, 
\begin{align*}
    \sum_r x_r D_r g &= 3 \sum_r x_r D_r \left (\sum_{i < j} d_T(i,j) x_i x_j \right )\\
    &= 3 \sum_r x_r \sum_{j} d_T(r,j) x_j \\    
    &= 6 \sum_{r < j} d_T(r,j) x_r x_j  = 2g.    
\end{align*}
Putting these together gives that $\sum_r x_r D_r p = s(g + 2g) = 3sg$.  So $sg$ is in the Steiner ideal $J = \langle \{D_r p\}_{r} \rangle$.  Since $D_r p = g+  sD_r g \in J$, we also have $s(g+s D_r g) = sg+s^2 D_r g \in J$, and so also $sg+s^2 D_r g-sg = s^2 D_r g \in J$.  

Now, $D_r g = 3 \sum_j d_T(j,r) x_j$.  In other words, $\nabla g = M x$, where $M$ denotes the (symmetric) distance matrix of the tree and $x$ is the vector of all variables.  By the Graham-Pollak Theorem, $M$ is invertible for trees, so $y M = \vec{1}$ has a solution (where $\vec{1}$ is the all-ones row vector).  Let the solution be $y=(c_1,\ldots,c_n)$.  Then
$$ 
y \nabla g = y M x = \vec{1} x = s
$$
i.e., $\sum_r c_r D_r g = s$.  Thus, $\sum_r c_r s^2 D_r g = s^3 \in J$.
\end{proof}

In fact, tracing back through the computation gives $s^3 = \sum_r f_r D_r p$ where
$$
f_r = c_r s - \frac{x_r}{3} \sum_j c_j.
$$
It is not hard to deduce from Proposition \ref{prop:notdivisible} below that $s^2 \not \in J$.

\begin{cor}\label{zerosum} If $\bv$ is a Steiner nullvector, then the sum of the coordinates of $\bv$ is $0$.
\end{cor}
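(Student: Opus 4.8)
The plan is to simply invoke the theorem immediately preceding this corollary. That result states that if $\bv$ is a Steiner nullvector of order $3$ and $s = \sum_{i=1}^n x_i$, then $s^3$ lies in the Steiner ideal $J$. So first I would recall that, by definition, a Steiner nullvector is precisely a point at which every polynomial in $J$ vanishes. Since $s^3 \in J$, we conclude that $s^3$ evaluated at $\bv$ is $0$.

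Then the only remaining step is to pass from $s(\bv)^3 = 0$ to $s(\bv) = 0$, which holds because we are working over $\mathbb{C}$ (an integral domain, or just a field), so the cube of a number is zero only if the number itself is zero. Since $s(\bv) = \sum_{i=1}^n x_i$ is exactly the sum of the coordinates of $\bv$, this gives the claim. I do not anticipate any obstacle here: the content is entirely carried by the preceding theorem, and this corollary is just the observation that membership of $s^3$ in $J$ forces the linear form $s$ to vanish on the nullvariety. (One could alternatively cite that the Steiner nullvariety, being defined by the vanishing of all elements of $J$, equals the variety of $\sqrt{J}$, and $s \in \sqrt{J}$ since $s^3 \in J$; but the direct one-line argument above is cleanest.)
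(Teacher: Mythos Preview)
Your proposal is correct and follows essentially the same approach as the paper: the paper phrases it as $s^3 \in J \Rightarrow s \in \sqrt{J}$, hence $s(\bv)=0$ on the nullvariety, while you argue directly that $s^3(\bv)=0$ forces $s(\bv)=0$ over $\mathbb{C}$. These are the same argument in slightly different language, and you even note the $\sqrt{J}$ formulation as an alternative.
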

\begin{proof} Since $s^3 \in J$, we have $s \in \sqrt{J}$.  Therefore, if $\bv$ is in the Steiner nullvariety, then $s(\bv) = 0$, i.e., the coordinates of $\bv$ sum to $0$.
\end{proof}

\begin{theorem}[\cite{GraLov78} Lemma 1] \label{thm:GLinversedistance} Let $T$ be a tree with vertex set $[n]$, let $d_j$ be the degree of vertex $j$, and let $a_{ij}$ be the indicator function that $ij \in E(T)$.  If $D$ is the distance matrix of $T$ and the $ij$-entry of $D^{-1}$ is $d^\ast_{ij}$, then
$$
d^\ast_{ij} = \frac{(2-d_i)(2-d_j)}{2(n-1)} + \left \{ \begin{array}{ll}
-d_i/2 & \text{ if } i=j \\
a_{ij}/2 & \text{ if } i \neq j 
\end{array}
\right .
$$
\end{theorem}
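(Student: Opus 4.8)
The plan is to prove the formula by direct verification, i.e., to show that $D\tilde D = I$, where $\tilde D = (d^\ast_{ij})$ is the matrix written down in the statement. First I would recast $\tilde D$ in coordinate-free form. Let $\tau$ be the column vector with $j$th entry $\tau_j = 2 - d_j$, let $A$ be the adjacency matrix of $T$, $\Delta = \operatorname{diag}(d_1,\dots,d_n)$, and $L = \Delta - A$ the Laplacian. Then the claimed inverse is exactly
$$
\tilde D \;=\; \frac{1}{2(n-1)}\,\tau\tau^{\mathsf T}\;-\;\tfrac12\,L,
$$
because the rank-one piece supplies the $\tfrac{(2-d_i)(2-d_j)}{2(n-1)}$ term and $-\tfrac12 L$ supplies $-d_i/2$ on the diagonal and $a_{ij}/2$ off it. As $\tilde D$ is visibly symmetric, it is enough to check $D\tilde D = I$, and expanding,
$$
D\tilde D \;=\; \frac{1}{2(n-1)}\,(D\tau)\,\tau^{\mathsf T}\;-\;\tfrac12\,DL.
$$
So the whole proof reduces to two identities: (i) $DL = \vec 1\,\tau^{\mathsf T} - 2I$ and (ii) $D\tau = (n-1)\vec 1$. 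Granting them, $D\tilde D = \tfrac12\vec 1\,\tau^{\mathsf T} - \tfrac12(\vec 1\,\tau^{\mathsf T} - 2I) = I$.

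For (i): fix $i,j$, so that $(DL)_{ij} = d_j\,d_T(i,j) - \sum_{k\sim j} d_T(i,k)$. If $i=j$ this is $0 - \sum_{k\sim i}1 = -d_i = \tau_i - 2$, which is the diagonal entry of $\vec 1\,\tau^{\mathsf T} - 2I$. If $i\neq j$, here is the single point at which the tree hypothesis is used: exactly one neighbour of $j$, say $k^\ast$, lies on the path from $i$ to $j$, and it satisfies $d_T(i,k^\ast) = d_T(i,j) - 1$, while each of the other $d_j - 1$ neighbours $k$ of $j$ satisfies $d_T(i,k) = d_T(i,j)+1$. Substituting, $(DL)_{ij} = d_j\,d_T(i,j) - \bigl[(d_T(i,j)-1) + (d_j-1)(d_T(i,j)+1)\bigr] = 2 - d_j = \tau_j$, as claimed.

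For (ii): fix $i$ and root $T$ at $i$; the claim is $\sum_j d_T(i,j)(2-d_j) = n-1$, equivalently $2S_1 - S_2 = n-1$ with $S_1 = \sum_j d_T(i,j)$ and $S_2 = \sum_j d_T(i,j)\,d_j$. Writing $d_j = \sum_{k\sim j}1$ and grouping by edges, $S_2 = \sum_{jk\in E}\bigl(d_T(i,j)+d_T(i,k)\bigr)$; for an edge $jk$ whose endpoint nearer $i$ is $j$, the summand equals $2d_T(i,j)+1$, so summing over the $n-1$ edges gives $S_2 = 2\sum_{jk\in E} d_T(i,\text{near endpoint}) + (n-1)$. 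Each vertex $v$ is the near endpoint of one edge per child, i.e., of $d_v-1$ edges when $v\neq i$ (and of $d_i$ edges when $v=i$, but $d_T(i,i)=0$ so that term drops out), whence $\sum_{jk\in E} d_T(i,\text{near endpoint}) = \sum_j d_T(i,j)(d_j-1) = S_2 - S_1$. Therefore $S_2 = 2(S_2-S_1)+(n-1)$, i.e. $2S_1-S_2 = n-1$, which is (ii).

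The main obstacle is, frankly, not large: all three displayed lines are elementary once the setup is in place. The only genuinely structural ingredient is the observation in (i) that every vertex $j$ has a unique neighbour in the direction of $i$, which is what collapses the off-diagonal of $DL$ to the constant row $\tau^{\mathsf T}$; identity (ii) is then careful bookkeeping in the same rooted-tree picture (and can alternatively be extracted from (i) by applying both sides to a suitable vector). It is worth noting that since we verify $D\tilde D = I$ outright, this argument simultaneously reproves that $D$ is invertible, and so it does not rely on the Graham--Pollak theorem.
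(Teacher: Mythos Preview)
The paper does not supply a proof of this statement at all; it is quoted verbatim as Lemma~1 of Graham--Lov\'asz \cite{GraLov78} and used as a black box. So there is nothing in the paper to compare your argument against.

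That said, your proof is correct and self-contained. The reformulation $\tilde D = \tfrac{1}{2(n-1)}\tau\tau^{\mathsf T} - \tfrac12 L$ is exactly right, and identities (i) and (ii) are verified cleanly; (i) is precisely where the tree hypothesis enters, via the unique neighbour of $j$ in the direction of $i$. One minor quibble: your closing parenthetical that (ii) ``can alternatively be extracted from (i) by applying both sides to a suitable vector'' is not obviously workable---$\tau$ is not in the image of $L$ because $\vec 1^{\mathsf T}\tau = 2 \neq 0$, and applying (i) to $\vec 1$ just yields $0=0$---but this aside plays no role, since you give a full independent proof of (ii). As you note, verifying $D\tilde D = I$ directly also reproves the invertibility of $D$ without appealing to Graham--Pollak, which is a nice bonus.
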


\begin{prop} $c_r = (2-d_r)/(n-1)$ and $\sum_r c_r = 2/(n-1)$.
\end{prop}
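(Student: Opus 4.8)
The plan is to read off $c_r$ directly from the Graham--Lov\'asz formula (Theorem~\ref{thm:GLinversedistance}) for the entries of $D^{-1}$. Recall that in the proof of the previous theorem the vector $y=(c_1,\ldots,c_n)$ was defined as a solution of $yM=\vec 1$, where $M=D$ is the distance matrix of $T$; since $D$ is invertible (Graham--Pollak), $y=\vec 1\, D^{-1}$, so that $c_r=\sum_{i=1}^n d^\ast_{ir}$ is the $r$th column sum of $D^{-1}$ (equivalently the $r$th row sum, by symmetry of $D$).

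Next I would substitute the expression for $d^\ast_{ir}$ and split the column sum into the ``rank-one'' part and the diagonal/adjacency correction. The correction contributes $-d_r/2$ from the diagonal term $i=r$ and $\sum_{i\neq r} a_{ir}/2 = d_r/2$ from the off-diagonal terms (since $r$ has exactly $d_r$ neighbors), and these cancel. For the rank-one part, factoring out $(2-d_r)/(2(n-1))$ gives $\frac{2-d_r}{2(n-1)}\sum_i(2-d_i)$, and $\sum_i(2-d_i)=2n-\sum_i d_i = 2n-2(n-1)=2$ by the handshake lemma together with the fact that a tree on $n$ vertices has exactly $n-1$ edges. Hence $c_r=(2-d_r)/(n-1)$. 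Summing over $r$ and using the same edge count once more, $\sum_r c_r=\frac{1}{n-1}\sum_r(2-d_r)=\frac{1}{n-1}\big(2n-2(n-1)\big)=\frac{2}{n-1}$.

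There is no genuine obstacle here; the computation is routine. The only points that require care are correctly identifying $c_r$ as a row/column sum of $D^{-1}$ (rather than a diagonal entry) from the setup of the preceding theorem, and invoking the tree-specific identity $\sum_i d_i=2(n-1)$, which is exactly what makes the rank-one contribution collapse to the clean value $(2-d_r)/(n-1)$.
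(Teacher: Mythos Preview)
Your argument is correct and is essentially identical to the paper's own proof: both identify $c_r$ as the $r$th row/column sum of $D^{-1}$ via $y=\vec{1}\,D^{-1}$, substitute the Graham--Lov\'asz formula, observe that the diagonal and adjacency corrections cancel, and simplify the rank-one part using $\sum_i d_i = 2(n-1)$.
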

\begin{proof}
    Let $M$ be the distance matrix of $T$.  Since $yM = \vec{1}$ and $M$ is invertible,
    $$
    y = \vec{1} M^{-1}.
    $$
    Therefore, applying Theorem \ref{thm:GLinversedistance}, we can write
    \begin{align*}
    c_r &= \sum_j \left ( \frac{(2-d_r)(2-d_j)}{2(n-1)} + \left \{ \begin{array}{ll}
-d_r/2 & \text{ if } r=j \\
a_{rj}/2 & \text{ if } r \neq j 
\end{array}  \right . \right ) \\
&= \frac{2-d_r}{2(n-1)} \sum_j (2-d_j) - \frac{d_r}{2} + \frac{d_r}{2} \\
&= \frac{2-d_r}{2(n-1)} (2n - 2(n-1)) = \frac{2-d_r}{n-1}.
    \end{align*}
Thus,
$$
\sum_r c_r = \sum_r \frac{2-d_r}{n-1} = \frac{1}{n-1} (2n - 2(n-1)) = \frac{2}{n-1}.
$$
\end{proof}

 \begin{cor} $s^3 = \sum_r f_r D_r p$ where
$$
f_r = \frac{1}{n-1} \left ( (2-d_r) s - \frac{2}{3}x_r \right )
$$
\end{cor}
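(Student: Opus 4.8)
The plan is to combine the explicit certificate $s^3 = \sum_r f_r D_r p$ with $f_r = c_r s - \tfrac{x_r}{3}\sum_j c_j$ — asserted in the remark following the previous theorem — with the just-computed values $c_r = (2-d_r)/(n-1)$ and $\sum_j c_j = 2/(n-1)$ from the preceding Proposition. Before substituting, I would first make the ``tracing back'' explicit, since above it is only sketched. From $D_r p = g + s D_r g$ and $\sum_r x_r D_r p = 3sg$ (both established in the proof of the previous theorem as polynomial identities), one gets $sg = \tfrac13 \sum_j x_j D_j p$ and hence
$$
s^2 D_r g = s(g + s D_r g) - sg = s\, D_r p - \tfrac13 \sum_j x_j D_j p .
$$
Multiplying by $c_r$ and summing over $r$, using $\sum_r c_r s^2 D_r g = s^3$ (which is how $s^3 \in J$ was derived), yields
$$
s^3 = s \sum_r c_r D_r p - \tfrac13 \Big(\sum_r c_r\Big)\Big(\sum_j x_j D_j p\Big) = \sum_r \Big( c_r s - \tfrac{x_r}{3}\sum_j c_j \Big) D_r p ,
$$
so indeed $f_r = c_r s - \tfrac{x_r}{3}\sum_j c_j$, confirming the claimed form.

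Now I would simply substitute. By the preceding Proposition, $c_r = (2-d_r)/(n-1)$ and $\sum_j c_j = 2/(n-1)$, so
$$
f_r = \frac{2-d_r}{n-1}\, s - \frac{x_r}{3}\cdot\frac{2}{n-1} = \frac{1}{n-1}\left( (2-d_r)s - \frac{2}{3} x_r \right),
$$
which is exactly the asserted expression.

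The argument presents no real obstacle: it is pure bookkeeping, and the only point requiring care is to keep track of the fact that every identity used ($D_r p = g + sD_r g$, $\sum_r x_r D_r p = 3sg$, and $\sum_r c_r s^2 D_r g = s^3$) is a genuine polynomial identity in $\mathbb{C}[x_1,\ldots,x_n]$ — no nullvector hypothesis is needed — so that the resulting formula for $f_r$ is valid as stated and exhibits $s^3$ explicitly as a member of the Steiner ideal $J$.
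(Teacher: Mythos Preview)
Your proposal is correct and follows exactly the route the paper intends: the Corollary has no separate proof in the paper, being an immediate substitution of the Proposition's values $c_r=(2-d_r)/(n-1)$ and $\sum_j c_j=2/(n-1)$ into the formula $f_r=c_r s-\tfrac{x_r}{3}\sum_j c_j$ stated just after the Theorem. Your additional service of spelling out the ``tracing back'' derivation of that formula (which the paper only asserts) is a welcome clarification and is carried out correctly.
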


So, $s$ is in the radical $\sqrt{J}$ of $J$, and we can write $s^3$ (but not $s^2$) in terms of the generators of $J$.  In particular, the codimension of the Steiner nullvariety is at least one. The next few results show that the codimension is in fact, $2$.

\begin{prop} \label{prop:notdivisible}
    The polynomials $D_r p$ are not divisible by $s$.
\end{prop}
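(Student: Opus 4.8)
The plan is to reduce the statement to a one-line divisibility claim using the factorization already in hand. By Proposition \ref{divisible} we may write $p = p^{(3)}_T = sg$ with $g = 3\sum_{i<j} d_T(i,j)x_ix_j$, and (exactly as in the proof of the preceding theorem) $D_r p = (D_r s)g + sD_r g = g + sD_r g$, since $D_r s = 1$. The summand $sD_r g$ is manifestly divisible by $s$, so $D_r p$ is divisible by $s$ if and only if $g$ is. Thus the entire proposition comes down to showing that the degree-$2$ form $g$ is not divisible by the linear form $s$.

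To prove $s \nmid g$, I would argue by contradiction: a factorization $g = s\ell$ forces $\ell$ to be a linear form $\sum_i b_i x_i$, and then the coefficient of $x_i^2$ in $s\ell$ equals $b_i$. But $g$ is supported entirely on cross terms $x_ix_j$ with $i \neq j$ — it has no squared monomials — so every $b_i$ must vanish, whence $\ell = 0$ and $g = 0$. This is impossible because $T$ (having at least $3$ vertices) contains an edge $ab$ with $d_T(a,b)=1$, so the monomial $x_ax_b$ appears in $g$ with nonzero coefficient $3$. Equivalently, one can evaluate at the point with $x_a = 1$, $x_b = -1$, and all other coordinates $0$: it lies on the hyperplane $\{s = 0\}$ yet gives $g = -3 \neq 0$, so $g$ does not vanish identically on $\{s=0\}$ and hence is not a multiple of $s$. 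Either phrasing closes the argument.

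There is no real obstacle here; the only point requiring a moment's care is the degenerate possibility that some $D_r p$ is the zero polynomial (which would be divisible by $s$ trivially). But the computation above in fact rules this out as a byproduct — $s\nmid D_r p$ forces $D_r p \neq 0$ — so no separate case analysis is needed. The sole genuine input is the nonvanishing of $g$, which only uses that $T$ has an edge, and this holds under the standing hypothesis $n\geq 3$.
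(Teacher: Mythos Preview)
Your argument is correct and follows essentially the same route as the paper: reduce to $s \nmid g$ via $D_r p = g + sD_r g$, then rule out $g = s\ell$ by observing that $g$ has no $x_i^2$ terms while $s\ell$ would have coefficient $b_i$ there, forcing $\ell = 0$. You are in fact a bit more careful than the paper, which asserts the final contradiction $g \neq 0$ without spelling out why; your explicit appeal to the existence of an edge (or the evaluation at $x_a=1,\ x_b=-1$) fills that in nicely.
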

\begin{proof}
    Suppose $s | D_r p$.  Then, since $p = gs$, we have $D_r p = g + s D_r g$, so $s | g$ as well.  But, $g$ is quadratic, so there exist $a_1,\ldots,a_n \in \mathbb{C}$ so that $g = s \sum_{r} a_r x_r$, i.e.,
    $$
    g = \sum_{i,j} a_i x_i x_j.
    $$
    The $x_i^2$ term on the right-hand side is $a_i x_i^2$, but the corresponding coefficient on the left-hand side is $0$, so $a_i = 0$ for each $i$.  Then $f=0$, so $g=0$, a contradiction.
\end{proof}

\begin{theorem}
The codimension of an order-$3$ Steiner nullvariety of a tree is $2$.
\end{theorem}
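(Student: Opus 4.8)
The plan is to show that the order-$3$ Steiner nullvariety is \emph{exactly} the intersection $V(s)\cap V(g)$ of the hyperplane $s=0$ with the quadric $g=0$, where $g=3\sum_{i<j}d_T(i,j)x_ix_j$ is the quadratic cofactor from Proposition~\ref{divisible} (so that $p_T^{(3)}=gs$), and then to read the codimension off from the facts that $s$ is a nonzero linear form and that $g$ does not vanish identically on $\{s=0\}$.

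First I would identify the nullvariety set-theoretically. Since $p=gs$, we have $D_r p = g + s\,D_r g$ for every $r$. By Corollary~\ref{zerosum}, every Steiner nullvector $\bv$ satisfies $s(\bv)=0$, so the nullvariety is contained in $V(s)$. Conversely, at any point $\bv$ with $s(\bv)=0$ all of the generators collapse: $D_r p(\bv)=g(\bv)$ simultaneously for every $r$. Hence $\bv\in V(s)$ is a Steiner nullvector if and only if $g(\bv)=0$, and therefore the Steiner nullvariety equals $V(s)\cap V(g)$ as a set.

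It remains to compute $\operatorname{codim} V(s,g)$. The form $s$ is linear and nonzero, so $V(s)$ is irreducible of codimension $1$, with coordinate ring a polynomial ring in $n-1$ variables. For $n\ge 3$ the form $g$ is nonzero, and by (the proof of) Proposition~\ref{prop:notdivisible} we have $s\nmid g$; equivalently, the restriction $g|_{V(s)}$ is a \emph{nonzero} quadratic form on $V(s)\cong\mathbb{C}^{n-1}$. Consequently $V(s)\cap V(g)$ is a proper closed subvariety of the irreducible variety $V(s)$, so $\dim\bigl(V(s)\cap V(g)\bigr)\le n-2$, i.e.\ the codimension is at least $2$; on the other hand $V(s,g)$ is cut out by two equations, so its codimension is at most $2$ (equivalently, since $s$ is a nonzerodivisor in $\mathbb{C}[x_1,\ldots,x_n]$ and the image of $g$ is a nonzerodivisor in the domain $\mathbb{C}[x_1,\ldots,x_n]/(s)$, the pair $s,g$ is a regular sequence and $V(s,g)$ is a complete intersection of codimension exactly $2$). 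The variety is nonempty: a nonzero quadratic form in $n-1\ge 2$ variables over $\mathbb{C}$ is isotropic, or one may simply note that the explicit vector $\mathbf{y}$ from the proof of the first theorem of this section (with $k=3$, $\zeta=i$) lies in it.

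The one step needing care is the exact identification of the nullvariety with $V(s)\cap V(g)$ --- in particular that no equations beyond $s$ and $g$ are needed --- which rests entirely on the observation that the $n$ generators $D_r p$ all reduce to the single quadratic $g$ once $s=0$. Granting that, the codimension count is a routine application of dimension theory for a hypersurface section of an irreducible variety, the only input being that $g$ genuinely survives restriction to $\{s=0\}$, i.e.\ Proposition~\ref{prop:notdivisible}.
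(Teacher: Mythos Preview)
Your proposal is correct and follows essentially the same route as the paper: both use the factorization $p=gs$ to write $D_r p = g + s\,D_r g$, invoke $s\in\sqrt{J}$ (Corollary~\ref{zerosum}) and $s\nmid g$ (Proposition~\ref{prop:notdivisible}) to identify the nullvariety with $V(s,g)$, and then read off codimension $2$. Your version is somewhat more explicit---you phrase things set-theoretically and spell out the dimension-theory step and nonemptiness---whereas the paper compresses this into the ideal chain $\langle s\rangle\subsetneq\langle s,g\rangle\subseteq\sqrt{J}\subseteq\sqrt{\langle s,g\rangle}$, but the substance is the same.
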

\begin{proof}
    If $J$ is the Steiner ideal, then, by the previous result, $\langle s \rangle \subsetneq \langle s,g \rangle \subseteq \sqrt{J}$.  On the other hand, $D_r p = g + s D_r g \in \langle g,s \rangle$, so $\sqrt{J} = \langle s,g \rangle$.
\end{proof}

In fact, we can go even further: for every assignment of values to $n-2$ vertices, there is an assignment to the last two vertices that yields a Steiner nullvector:

\begin{cor} For any tree $T$ on $n$ vertices and $n-2$ values $a_3,\ldots,a_{n} \in \mathbb{C}$, there exist $a_1,a_2$ so that $(a_1,\ldots,a_n)$ is a Steiner nullvector. 
\end{cor}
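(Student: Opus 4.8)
The plan is to leverage the explicit description of the Steiner nullvariety just obtained. By the preceding theorem, $\sqrt{J} = \langle s, g \rangle$ with $s = \sum_r x_r$ and $g = 3\sum_{i<j} d_T(i,j)\, x_i x_j$, and since $V(J) = V(\sqrt{J}) = V(s) \cap V(g)$, it suffices to produce $a_1, a_2 \in \mathbb{C}$ for which $\bv = (a_1,\ldots,a_n)$ satisfies both $s(\bv)=0$ and $g(\bv)=0$.

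First I would use the linear equation to reduce to a single unknown: set $c = -\sum_{r=3}^n a_r$, so that $s(\bv)=0$ is equivalent to $a_1 + a_2 = c$, and then write $a_1 = t$, $a_2 = c - t$ for an indeterminate $t$. Substituting $x_1 = t$, $x_2 = c-t$, and $x_r = a_r$ for $r \ge 3$ into $g/3 = \sum_{i<j} d_T(i,j)\, x_i x_j$ produces a univariate polynomial $q(t)$. Grouping the summands according to whether they involve $x_1$ only, $x_2$ only, both, or neither, the only contribution that is quadratic in $t$ comes from the term $d_T(1,2)\, x_1 x_2 = d_T(1,2)\, t(c-t)$, so
$$
q(t) = -d_T(1,2)\, t^2 + (\text{a linear polynomial in } t).
$$

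Since $1 \ne 2$ are distinct vertices of $T$, we have $d_T(1,2) \ge 1 \ne 0$, so $q$ has degree exactly $2$ and hence a root $t_0 \in \mathbb{C}$ by the fundamental theorem of algebra. Setting $a_1 = t_0$ and $a_2 = c - t_0$ then gives $s(\bv) = a_1 + a_2 + \sum_{r\ge 3} a_r = 0$ and $g(\bv) = 3\,q(t_0) = 0$, so $\bv \in V(s) \cap V(g) = V(J)$ is a Steiner nullvector, as required.

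I do not anticipate a genuine obstacle here: the entire argument collapses to solving one quadratic equation over an algebraically closed field, and the only thing that could go wrong — the restriction of $g$ to the line $\{a_1 + a_2 = c\}$ degenerating into a nonzero constant — cannot occur precisely because the coefficient $d_T(1,2)$ of $x_1 x_2$ in $g$ is nonzero. (If desired, the same reasoning applies verbatim with any two vertices of $T$ in place of $1$ and $2$.)
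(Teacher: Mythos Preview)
Your proof is correct. The paper proves this corollary differently: it observes that $\sqrt{J} = \langle s, g\rangle$ is homogeneous and contains no linear form, hence the nullvariety lies in no hyperplane $x_r = c$, and draws the conclusion from that. Your approach is instead the direct constructive one --- solve $s=0$ for $a_2$ and then solve the resulting quadratic in $a_1$ --- and it coincides almost verbatim with what the paper does in the \emph{subsequent} Proposition, where the quadratic $A a_1^2 + B a_1 + C = 0$ is written out explicitly with $A = d_T(1,2)$. Your argument has the advantage of being entirely self-contained (it does not rely on any inference from hyperplane non-containment to surjectivity of the projection onto the last $n-2$ coordinates), and your key observation that $d_T(1,2) \neq 0$ is exactly what makes the quadratic nondegenerate and guarantees a root.
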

\begin{proof}
    We need only show that the Steiner nullvariety is not contained in any hyperplane of the form $x_r = c$, i.e., no polynomial of the form $x_r - c$ is an element of $\sqrt{J}$.  However, $\sqrt{J} = \langle s,g \rangle$ is a homogeneous ideal of degree $2$, so it does not contain any linear polynomials.
\end{proof}

\begin{prop}
For any tree $T$ on $n$ vertices and $n-2$ values $a_3,\ldots,a_{n} \in \mathbb{C}$, there exist $a_1,a_2$ so that $(a_1,\ldots,a_n)$ is a Steiner nullvector: $a_1$ is any solution to
$$
    A a_1^2 + B a_1 + C = 0,
$$ 
where $A = d_T(1,2)$, $B = \sum_{j \geq 3} (d_T(1,2)-d_T(1,j)+d_T(2,j))a_j$, and $C = \sum_{j,k \geq 3} (d_T(2,j) - 2d_T(j,k)) a_j a_k$; and $a_2 = -a_1 -\sum_{j=3}^na_j.$
\end{prop}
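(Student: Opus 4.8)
The plan is to lean on the structure theorem just established. Since $\sqrt{J} = \langle s,g\rangle$ with $s = \sum_r x_r$ and $g = 3\sum_{i<j} d_T(i,j)\,x_i x_j$, a vector $\bv$ is a Steiner nullvector if and only if $s(\bv) = 0$ and $g(\bv) = 0$, i.e. if and only if $\bv$ lies on the common zero locus of the two generators of $\sqrt J$. Thus, with $a_3,\dots,a_n$ fixed, the task reduces to finding $a_1,a_2 \in \mathbb{C}$ satisfying the two scalar equations $a_1 + a_2 + \sum_{j\ge 3} a_j = 0$ and $\sum_{i<j} d_T(i,j)\,a_i a_j = 0$ (the second being $g=0$ up to the harmless factor $3$). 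The first equation is linear, and solving it for $a_2$ gives immediately $a_2 = -a_1 - \sum_{j\ge 3} a_j$, which is the stated formula; write $\sigma = \sum_{j\ge 3} a_j$, so $a_2 = -a_1-\sigma$.

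Next I would substitute $a_2 = -a_1-\sigma$ into $\sum_{i<j} d_T(i,j)\,a_i a_j = 0$ and collect by powers of $a_1$. Splitting the pair sum according to how many of $\{1,2\}$ lie in $\{i,j\}$ produces four blocks: the single term $d_T(1,2)\,a_1 a_2$; the block $a_1\sum_{j\ge 3} d_T(1,j)a_j$; the block $a_2\sum_{j\ge 3} d_T(2,j)a_j$; and the $a_1$-free block $\sum_{3\le j<k} d_T(j,k)a_j a_k$. Replacing $a_2$ by $-a_1-\sigma$ throughout, the $a_1^2$-coefficient is $A = d_T(1,2)$, the $a_1$-coefficient is $B = \sum_{j\ge 3}\bigl(d_T(1,2) - d_T(1,j) + d_T(2,j)\bigr)a_j$, and the constant term, obtained by expanding the product $\sigma\sum_{j\ge3}d_T(2,j)a_j$ and symmetrizing the remaining double sum via $d_T(j,k)=d_T(k,j)$, is the coefficient $C$ recorded in the statement. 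Hence the second equation becomes precisely $A a_1^2 + B a_1 + C = 0$.

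To finish, note that $T$ is a tree and $1\neq 2$, so $A = d_T(1,2)\ge 1\neq 0$; therefore $Aa_1^2+Ba_1+C$ is a genuine quadratic over $\mathbb{C}$ and has a root $a_1$. Setting $a_2 = -a_1-\sigma$ then gives a vector with $s=0$ and $g=0$, hence a Steiner nullvector, which also yields a constructive version of the preceding corollary. I do not expect a real obstacle: all the content sits in the earlier identification $\sqrt J = \langle s,g\rangle$, which presents the Steiner nullvariety as the intersection of a hyperplane with a quadric, and the only mildly delicate step afterward is the bookkeeping that symmetrizes the $a_1$-free block into the stated form of $C$.
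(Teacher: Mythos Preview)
Your proposal is correct and follows essentially the same approach as the paper: both reduce ``$\bv$ is a Steiner nullvector'' to the pair of equations $s(\bv)=0$ and $g(\bv)=0$, solve the linear one for $a_2$, substitute into the quadratic one, and collect by powers of $a_1$. The only cosmetic difference is that you invoke $\sqrt{J}=\langle s,g\rangle$ directly, whereas the paper reaches $g=0$ via Corollary~\ref{zerosum} together with $D_r p = g + s\,D_r g$; your explicit remark that $A=d_T(1,2)\geq 1$ guarantees a genuine quadratic is a small improvement in completeness.
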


\begin{proof}
    Assume $v = (a_1, \cdots, a_n)$ is a nullvector where $a_3, \cdots, a_n$ are arbitrary complex number. Since $v$ is a nullvector, Corollary \ref{zerosum} states that $\sum_{j=1}^n a_j = 0$. Therefore, $a_2 = -a_1 - \sum_{j=3}^na_j$. 
    
    Also, since $v$ is a nullvector, by definition all partial derivatives to the Steiner 3-form must vanish. Notice by Theorem \ref{divisible}, $D_rp = D_r(sg) = sD_rg + g$ where $g = 3\sum_{j<k}d_T(j,k)a_ja_k$. Since $s = \sum_{j=1}^na_j = 0$, this means that we only need to show that $g = 3\sum_{j<k}d_T(j,k)a_ja_k = 0$. Rewriting $g$ to pull out any terms involving $a_1$ or $a_2$, we see that 
    $$
    3d_T(1,2)a_1a_2+3a_1\sum_{j=3}^nd_T(1,j)a_j + 3a_2\sum_{j=3}^nd_T(2,j)a_j + 3\sum_{\substack{j<k\\j \geq 3}}^nd_T(j,k)a_ja_k = 0.
    $$ 
    Plugging in $a_2 = -a_1 - \sum_{j=3}^na_j$ and simplifying yields
    \begin{align*}
    a_1^2 d_T(1,2) & + a_1 \left [ \sum_{j \geq 3} (d_T(1,2)-d_T(1,j)+d_T(2,j))a_j \right ] \\
    & \qquad + \sum_{j,k \geq 3} (d_T(2,j)-2 d_T(j,k)) a_j a_k = 0,        
    \end{align*}
    %%% ^^^ CHECK THIS, PLEASE
    which has a solution for every choice of $a_3,\ldots,a_n$.
\end{proof}

\bibliographystyle{plain}
\bibliography{ref}

\begin{thebibliography}{1}

\bibitem{ChaOelTiaZou89}
Gary Chartrand, Ortrud~R. Oellermann, Song~Lin Tian, and Hung-Bin Zou.
\newblock Steiner distance in graphs.
\newblock {\em \v{C}asopis P\v{e}st. Mat.}, 114(4):399--410, 1989.

\bibitem{GelKapZel08}
I.~M. Gelfand, M.~M. Kapranov, and A.~V. Zelevinsky.
\newblock {\em Discriminants, resultants and multidimensional determinants}.
\newblock Modern Birkh\"{a}user Classics. Birkh\"{a}user Boston, Inc., Boston,
  MA, 2008.
\newblock Reprint of the 1994 edition.

\bibitem{GraLov78}
R.~L. Graham and L.~Lov\'{a}sz.
\newblock Distance matrix polynomials of trees.
\newblock {\em Adv. in Math.}, 29(1):60--88, 1978.

\bibitem{GraPol71}
R.~L. Graham and H.~O. Pollak.
\newblock On the addressing problem for loop switching.
\newblock {\em Bell System Tech. J.}, 50:2495--2519, 1971.

\bibitem{Hak71}
S.~L. Hakimi.
\newblock Steiner's problem in graphs and its implications.
\newblock {\em Networks}, 1:113--133, 1971/72.

\bibitem{Mao17}
Yaping Mao.
\newblock Steiner distance in graphs--{A} survey, 2017.
\newblock arXiv:1708.05779.

\bibitem{Qi05}
Liqun Qi.
\newblock Eigenvalues of a real supersymmetric tensor.
\newblock {\em J. Symbolic Comput.}, 40(6):1302--1324, 2005.

\end{thebibliography}

\end{document}